\documentclass[10pt,reqno]{amsart}
\usepackage{hyperref,amsmath,amsfonts,amssymb,amsthm,enumerate}
\usepackage[mathcal]{euscript}
\usepackage[numbers]{natbib}
\usepackage[a5paper, left=8mm, right=8mm, top=1.5cm, bottom =1cm]{geometry}
\def\N{\mathbb{N}}
\def\d{{\fam0 d}}
\def\codim{\operatorname{codim}}
\def\dim{\operatorname{dim}}
\def\spn{\operatorname{span}}
\def\rank{\operatorname{rank}}
\newtheorem{theorem}{Theorem}[section]
\newtheorem{lemma}[theorem]{Lemma}
\numberwithin{equation}{section}
\expandafter\let\expandafter\oldproof\csname\string\proof\endcsname
\let\oldendproof\endproof
\renewenvironment{proof}[1][\proofname]{%
  \oldproof[\bf #1]%
}{\oldendproof}
\def\Li{{L^1}}
\def\LI{{L^1(0,1)}}
\def\li{{\ell^1}}
\def\Linf{{C}}
\def\LInf{{C[0,1]}}
\def\linf{{c}}
\def\Id{\mathop{\mathrm{Id}}}
\def\vx{\mathbf{x}}
\def\vy{\mathbf{y}}
\def\ve{\mathbf{e}}

\begin{document}

\title{Strict s-numbers of the Volterra operator}

\begin{abstract}
For Volterra operator $V\colon \LI \to \LInf$
and summation operator $\sigma\colon \li \to \linf$,
we obtain exact values of Approximation, Gelfand,
Kolmogorov, Mityagin and Isomorphism numbers.
\end{abstract}

\author[\"O. Bak\c si]{\"{O}zlem Bak\c{s}i\textsuperscript{1,2}}
\author[T. Khan]{Taqseer Khan\textsuperscript{1,3}}
\author[J. Lang]{Jan Lang\textsuperscript{1}}
\author[V. Musil]{V\'\i t Musil\textsuperscript{1,4}}

\email[\"O.~Bak\c si]{baksi@yildiz.edu.tr}
\email[T.~Khan]{taqi.khan91@gmail.com}
\email[J.~Lang]{lang@math.osu.edu}
\urladdr{0000-0003-1582-7273}
\email[V.~Musil]{musil@karlin.mff.cuni.cz}
\urladdr{0000-0001-6083-227X}

\address{\textsuperscript{1}%
Department of Mathematics,
Ohio State University,
Columbus OH,
43210-1174 USA}

\address{\textsuperscript{2}%
Yildiz Technical University,
Faculty of Art and Science,
Department of Mathematics,
Istanbul,
Turkey}

\address{\textsuperscript{3}%
Aligarh Muslim University,
Aligarh, U.P.-202002,
India}

\address{\textsuperscript{4}%
Department of Mathematical Analysis,
Faculty of Mathematics and Physics, 
Charles University,
So\-ko\-lo\-vsk\'a~83,
186~75 Praha~8,
Czech Republic}  

\date{\today}

\subjclass[2010]{Primary 47B06, Secondary 47G10}
\keywords{%
Integral operator, summation operator, $s$-numbers.
}

\thanks{%
This research was partly supported
by the United States -- India Educational Foundation (USIEF)
and by the grant P201-13-14743S of the Grant Agency of the Czech Republic.
}

\maketitle

\bibliographystyle{alpha}

\section{introduction and main results}

Compact operators and their sub-classes (nuclear operators, Hilbert-Schmidt
operators, etc.)  play a crucial role in many different areas of Mathematics. 
These operators are studied extensively  but somehow less attention is devoted
to  operators which
are non-compact but close to the class of compact operators. In this work we
will focus on two such operators.

First, consider the Volterra operator~$V$,
given by
\begin{equation} \label{Vdef}
	Vf(t) = \int_0^t f(s)\,\d s,
	\quad (0\le t\le 1),
	\quad\text{for $f\in \LI$.}
\end{equation}
When $V$ is regarded as an operator
from $L^p$ into $L^q$, ($1<p,q<\infty$), it
is a compact operator, however
in the limiting case, when
$V$ maps $L^1$ into the space
$C$ of continuous functions on the closed unit interval,
the operator is
bounded, with the operator norm $\|V\|=1$, but non-compact.
It is worth mentioning that, despite being non-compact or even weakly non-compact, this
operator possesses some good properties as being strictly singular (follows from \cite{BG},
or see \cite{Lef17}).
This makes Volterra operator, in the above-mentioned limiting case, an
interesting example of a non-compact operator ``close'' to the class of compact
operators.
The focus of our paper will be on obtaining exact values of strict $s$-numbers
for this operator.

Volterra operator was already extensively studied. Let us briefly recall those results related to our work.
The first credit goes to V.~I.~Levin \citep{Levin},
who computed explicitly the norm of $V$ between two $L^p$ spaces ($1<p<\infty$)
and described the extremal function which is connected with the function $\sin_p$.
Later on, E.~Schmidt in \citep{Schmidt} extended this result for $V\colon L^p\to L^q$,
where $1<p,q<\infty$.
This  operator was also studied in the context of Approximation theory
\citep{PinkusConstrApprox,Kolm,Tih,TB,BS,PinkusBook}.
Later a weighted version of this operator was studied in connection with
Brownian motion \citep{LLBook}, Spectral theory \citep{EEBook,EEBook2}
and Approximation theory \citep{ELBook}.

Recently, sharp estimates for Bernstein-numbers of $V$ in the
limiting case were obtained in \citep[Theorem 2.2]{Lef17}, more specifically,
\begin{equation} \label{LefBer}
	b_n(V) = \frac{1}{2n-1}
		\quad\text{for $n\in\N$},
\end{equation}
and also the estimates for the essential norm can be found in recent
preprint \citep{Letal}.

In our paper, we will compute exact values of all the remaining strict $s$-numbers, i.e.,
Approximation, Gelfand, Kolmogorov, Mityagin and Isomorphism numbers
denoted by
$a_n$, $c_n$, $d_n$, $m_n$ and $i_n$ respectively
(for the exact definitions see Section~\ref{sec:back}).
Our main result reads as follows.

\begin{theorem} \label{thm:main}
Let $V\colon \LI \to \LInf$ be defined as in \eqref{Vdef}.
Then
\begin{equation} \label{acd}
	a_n(V) = c_n(V) = d_n(V) = \frac{1}{2}
	\quad\text{for $n\ge 2$}
\end{equation}
and
\begin{equation} \label{bmi}
	m_n(V) = i_n(V) = \frac{1}{2n-1}
	\quad\text{for $n\in\N$}.
\end{equation}
\end{theorem}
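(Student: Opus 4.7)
The theorem splits into the two blocks of equalities \eqref{acd} and \eqref{bmi}, which require qualitatively different arguments. For \eqref{acd}, I would first collapse the three quantities to one via structural properties of the source and target. Since $L^1$ is an $\mathcal L_1$-space, its metric lifting property yields $d_n(V)=a_n(V)$: given an almost-optimal $(n-1)$-dimensional subspace $N\subset C[0,1]$ for $d_n(V)$, lifting the induced operator $L^1\to C[0,1]/N$ back to $L^1\to C[0,1]$ with arbitrarily small norm loss produces a rank-$(n-1)$ approximation of $V$. Dually, since $C[0,1]$ is an $\mathcal L_\infty$-space, the near-extension property for operators into it yields $c_n(V)=a_n(V)$. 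The upper bound $a_2(V)\le\tfrac12$ is then exhibited by the rank-one operator $Af=\tfrac12\int_0^1 f(s)\,\d s$: the symmetric rewriting $(V-A)f(t)=\tfrac12\bigl(\int_0^t f-\int_t^1 f\bigr)$ gives $\|V-A\|\le\tfrac12$, and monotonicity of $a_n$ extends this to $n\ge 2$. For the matching lower bound I rely on $a_n(V)\ge\|V\|_{\mathrm{ess}}$ and prove $\|V\|_{\mathrm{ess}}\ge\tfrac12$ by testing any compact $K$ against the unit-norm family $f_k=k\chi_{[\tfrac12-\tfrac1{2k},\,\tfrac12+\tfrac1{2k}]}$: by Arzel\`a--Ascoli the set $\{Kf:\|f\|_1\le 1\}$ is equicontinuous, so $|Kf_k(\tfrac12+\tfrac1{2k})-Kf_k(\tfrac12-\tfrac1{2k})|\to 0$, while $Vf_k$ jumps by exactly $1$ between the same two points, forcing $\|(V-K)f_k\|_\infty\ge\tfrac12-o(1)$.

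For \eqref{bmi}, I would leverage the known identity $b_n(V)=1/(2n-1)$ from \eqref{LefBer}. The general observation that any factorization $BTA=\Id_F$ on an $n$-dimensional $F$ forces $T$ to be bounded below by $(\|A\|\|B\|)^{-1}$ on $A(F)\subset X$ yields $i_n(T)\le b_n(T)$ for every operator $T$, and an analogous argument gives $m_n(T)\le b_n(T)$; combined with \eqref{LefBer} this furnishes $m_n(V),i_n(V)\le 1/(2n-1)$. For the matching lower bound I would revisit the explicit $n$-dimensional subspace $F_n\subset L^1$ underlying \eqref{LefBer}: on $F_n$, $V$ acts as an isomorphism onto $V(F_n)\subset C[0,1]$ with $\|(V|_{F_n})^{-1}\|=2n-1$. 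A norm-one projection from $C[0,1]$ onto $V(F_n)$ implemented by point evaluations at $n$ suitably chosen nodes (with a partition-of-unity-type dual basis of $V(F_n)$) then supplies factorization data $A\colon F_n\hookrightarrow L^1$ and $B\colon C[0,1]\to F_n$ satisfying $BVA=\Id_{F_n}$ and $\|A\|\|B\|=2n-1$, which plugs directly into the definitions of $m_n(V)$ and $i_n(V)$.

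The main technical obstacle is the essential-norm lower bound in \eqref{acd}: while the equicontinuity/jump dichotomy is conceptually clean, it requires careful quantifier handling to convert the pointwise jump of $Vf_k$ near $\tfrac12$ into a uniform bound $\|V-K\|\ge\tfrac12$ valid for every compact $K$. A secondary, lighter difficulty in \eqref{bmi} is the explicit identification of the extremal Bernstein subspace $F_n$ and the verification that $V(F_n)\subset C[0,1]$ admits the required norm-one projection realising the constant $2n-1$ exactly.
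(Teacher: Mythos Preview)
Your treatment of \eqref{acd} differs from the paper's and is largely sound. The paper never invokes the $\mathcal L_1/\mathcal L_\infty$ structure; instead it proves $c_n(V)\ge\tfrac12$ and $d_n(V)\ge\tfrac12$ by hand (Lemmas~\ref{lemm:Kolmogorov} and~\ref{lemm:Gelfand}), testing with the bumps $f_k=2^{k+1}\chi_{(2^{-k-1},2^{-k})}$ and using only compactness of finite-dimensional balls to extract limits. Your essential-norm argument via equicontinuity is cleaner and gives $a_n(V)\ge\tfrac12$ in one stroke; combined with the same rank-one operator the paper uses, this nails $a_n(V)=\tfrac12$. One caution: the identity $d_n=a_n$ for operators out of $L^1$ is a genuine theorem (the almost-lifting property of $\mathcal L_1$-spaces), but the companion claim $c_n=a_n$ for operators into $C[0,1]$ is more delicate than you suggest, since $C[0,1]$ is not injective and does not enjoy a metric extension property. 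You would need a local-reflexivity or bidual argument to push this through, whereas the paper's direct lower bound for $c_n$ sidesteps the issue entirely.

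For \eqref{bmi} your lower-bound strategy via an explicit factorisation $BVA=\Id$ with $\|A\|\,\|B\|=2n-1$ is exactly what the paper does (Lemma~\ref{lemm:isomorphism}), and since $i_n$ is the smallest strict $s$-number this automatically gives $m_n(V)\ge i_n(V)\ge 1/(2n-1)$. The gap is your upper bound for $m_n$. The inequality $m_n(T)\le b_n(T)$ is \emph{not} a general fact, and no ``analogous argument'' to the one for $i_n\le b_n$ produces it: from Mityagin data $Q_N T(B_X)\supset\rho B_{Y/N}$ you get a surjection onto an $n$-dimensional quotient, but extracting an $n$-dimensional subspace of $X$ on which $T$ is bounded below by $\rho$ is a lifting problem with no norm control in general. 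The paper (Lemma~\ref{lemm:Mityagin}) instead argues directly for this specific $V$: from the Mityagin inclusion and injectivity of $V$ it manufactures a finite-dimensional subspace $E\subset L^1$ on which $\|Vf\|_\infty\ge\rho\|f\|_1$, and then invokes the key combinatorial lemma of Lef\`evre (Lemma~2.4 in \cite{Lef17}) to force $\rho\le 1/(2n-1)$. You need some such $V$-specific step; the abstract inequality you cite does not exist.
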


If we also include the result \eqref{LefBer} concerning the Bernstein numbers,
we see that all the strict $s$-numbers of $V$
split between two groups. The upper half \eqref{acd} remains bounded from
below while the lower half \eqref{bmi} converges to zero.
This phenomenon for this operator was already observed;
for instance compare \citep{BS} and \citep{BG},
and for weighted version see
\citep{ELJMAA} and \citep{ELMathNachr}.

The similar results continue to hold for the sequence spaces and for the
discrete analogue of~$V$, namely for the operator
$\sigma\colon \li \to \linf$,
defined as
\begin{equation} \label{sdef}
	\sigma(\vx)_k = \sum_{j=1}^k x_j,
		\quad (k\in\N),
		\quad\text{for $\vx \in\li$,}
\end{equation}
where we denoted $\vx = \{x_j\}_{j=1}^\infty$ for brevity.
The operator is well-defined and bounded with the operator norm $\|\sigma\|=1$.
It is shown in \citep[Theorem 3.2]{Lef17} that
\begin{equation*}
		b_n(\sigma) = \frac{1}{2n-1}
		\quad\text{for $n\in\N$.}
\end{equation*}
We have the next result.

\begin{theorem} \label{thm:discr}
Let $\sigma\colon \li\to \linf$ be the operator from \eqref{sdef}.
Then
\begin{equation*}
	a_n(\sigma) = c_n(\sigma) = d_n(\sigma) = \frac{1}{2}
	\quad\text{for $n\ge 2$}
\end{equation*}
and
\begin{equation*}
	m_n(\sigma) = i_n(\sigma) = \frac{1}{2n-1}
	\quad\text{for $n\in\N$}.
\end{equation*}
\end{theorem}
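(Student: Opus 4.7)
The strategy is to reduce Theorem~\ref{thm:discr} to Theorem~\ref{thm:main} via an isometric factorisation of $\sigma$ through $V$, and then handle the remaining lower bounds either by invoking the cited value of $b_n(\sigma)$ or by redoing the discrete analogue of the lower-bound arguments from Theorem~\ref{thm:main}.

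First, partition $(0,1)$ into disjoint intervals $\{I_k\}_{k\in\N}$, for instance $I_k=[1-2^{-(k-1)},1-2^{-k})$, and denote by $t_k$ the right endpoint of $I_k$. Define
\begin{equation*}
J\colon\li\to\LI,\quad J\vx=\sum_k x_k|I_k|^{-1}\chi_{I_k},\qquad R\colon\LInf\to\linf,\quad R(f)_k=f(t_k).
\end{equation*}
One checks immediately that $J$ is an isometric embedding, $\|R\|=1$, and $\sigma=RVJ$. By the ideal property of strict $s$-numbers, $s_n(\sigma)=s_n(RVJ)\le\|R\|\,s_n(V)\,\|J\|=s_n(V)$, so Theorem~\ref{thm:main} at once gives the upper bounds $a_n(\sigma), c_n(\sigma), d_n(\sigma)\le 1/2$ for $n\ge 2$ and $m_n(\sigma), i_n(\sigma)\le 1/(2n-1)$ for $n\in\N$.

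For the matching lower bounds on $m_n$ and $i_n$ I would invoke the universal inequality $b_n\le x_n$ valid for every strict $s$-number $x_n$, together with the value $b_n(\sigma)=1/(2n-1)$ quoted in the introduction. Since $a_n\ge\max(c_n,d_n)$, what remains is to prove $c_n(\sigma), d_n(\sigma)\ge 1/2$ for $n\ge 2$. I would do this by mimicking the continuous argument from Theorem~\ref{thm:main}: given a subspace $M=\bigcap_{i=1}^{n-1}\ker\ell_i$ of $\li$ with $\ell_i\in\ell^\infty$, Bolzano--Weierstrass applied to the bounded sequence of coordinate vectors $\bigl((\ell_{1,k},\dots,\ell_{n-1,k})\bigr)_{k\in\N}\subset\mathbb{R}^{n-1}$ produces indices $k_1<k_2<\dots$ at which all the $\ell_i$ essentially coincide, and then a test vector $\vx$ in the span of the corresponding $\ve_{k_j}$'s can, after an arbitrarily small perturbation, be placed in $M$ while satisfying $\|\sigma\vx\|_{\linf}\ge(1/2-\varepsilon)\|\vx\|_{\li}$. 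The Kolmogorov bound $d_n(\sigma)\ge 1/2$ is handled either by the dual argument via Pietsch's identity $d_n(\sigma)=c_n(\sigma^{\ast})$ applied to the summation-like operator $\sigma^{\ast}\colon c^{\ast}\to\ell^{\infty}$, or directly by the analogous Bolzano--Weierstrass argument on the image side.

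The main obstacle is this last step. The factorisation $\sigma=RVJ$ by itself does not transfer lower bounds from $V$ to $\sigma$: pulling a subspace of codimension $<n$ in $\LI$ back through $J$ yields such a subspace in $\li$, but the opposite lifting direction is not norm-controlled because the complement of $J(\li)$ in $\LI$ is seen nontrivially by $V$. Consequently the lower bound must be proved intrinsically, and extracting the sharp constant $1/2$ (rather than the weaker $1/(2n)$ produced by a naive averaging of $n$ coordinates) requires a careful choice of the indices $k_j$ and the test vector.
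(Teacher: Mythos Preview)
Your factorisation $\sigma = RVJ$ with $J$ an isometry and $\|R\|=1$ is correct and gives all the \emph{upper} bounds in one stroke; the paper instead reproves each upper bound (Lemmas~\ref{lemm:approx}(ii) and~\ref{lemm:Mityagin}(ii)) by writing down the discrete analogue of the rank-one approximant and of the Lef\`evre argument. Your route here is more economical.

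There is, however, a genuine gap in your treatment of the lower bounds for $m_n(\sigma)$ and $i_n(\sigma)$. The inequality ``$b_n\le x_n$ for every strict $s$-number $x_n$'' is false: the \emph{Isomorphism} numbers, not the Bernstein numbers, are the smallest strict $s$-numbers, so in general one only has $i_n\le b_n$ (and likewise $i_n\le m_n$), cf.\ the proof of Theorem~\ref{thm:main}. Consequently, knowing $b_n(\sigma)=1/(2n-1)$ does \emph{not} yield $i_n(\sigma)\ge 1/(2n-1)$, and without that the lower bound for $m_n(\sigma)$ is also missing, since the paper obtains it precisely from $m_n\ge i_n$. You need an explicit factorisation of an $n$-dimensional identity through $\sigma$: the paper (Lemma~\ref{lemm:isomorphism}(ii)) takes $B\colon\ell^\infty_n\to\li$ sending $\vy$ to $(y_1,-y_1,\dots,y_{n-1},-y_{n-1},y_n,0,0,\dots)$ and $A\colon\linf\to\ell^\infty_n$ picking the odd coordinates, so that $A\sigma B=\Id$ with $\|A\|\,\|B\|=2n-1$.

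For $c_n(\sigma)$ and $d_n(\sigma)\ge\tfrac12$ your sketch is essentially what the paper does in parts~(ii) of Lemmas~\ref{lemm:Gelfand} and~\ref{lemm:Kolmogorov}: replace the step functions $f_k$ by the unit vectors $\ve^k$, use compactness in the finite-dimensional quotient to extract a Cauchy subsequence, and evaluate at two nearby coordinates to force the $\tfrac12$. Your Bolzano--Weierstrass description on the functional side is a legitimate variant of the same idea.
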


The proofs are provided at the end of Section~\ref{sec:proofs}.

\section{background material} \label{sec:back}

We shall fix the notation in this section, although we mostly work with standard notions from functional analysis.

\subsection{Normed linear spaces}

For normed linear spaces $X$ and $Y$, we denote by $B(X,Y)$ the set of
all bounded linear operators acting between $X$ and $Y$. For any $T\in B(X,Y)$,
we use just $\|T\|$ for its operator norm, since the domain and target spaces
are always clear from the context.
By $B_X$, we mean the closed unit ball of $X$ and,
similarly, $S_X$ stands for the unit sphere of $X$. It is well-known fact that $B_X$ is compact if and only if $X$ is finite-dimensional.

Let $Z$ be a closed subspace of the normed space $X$. The quotient space $X/Z$ is the
collection of the sets $[x] = x+Z = \{x+z;\,z\in Z\}$ equipped with the norm
\begin{equation*}
	\| [x] \|_{X/Z} = \inf\{ \|x-z\|_X;\,z\in Z\}.
\end{equation*}
We sometimes adopt the notation $\|x\|_{X/Z}$ when no confusion is likely to happen.
Recall the notion of canonical map $Q_Z\colon X\to X/Z$, given by $Q_Z(x)=[x]$.

By the Lebesgue space $L^1$, we mean the set of all real-valued, Lebesgue integrable
functions on $(0,1)$ identified almost everywhere and equipped with
the norm
\begin{equation*}
	\|f\|_1 = \int_{0}^{1} |f(s)|\,\d s.
\end{equation*}
The space of real-valued, continuous functions on $[0,1]$, denoted by $C$, enjoys the norm
\begin{equation*}
	\|f\|_\infty = \sup_{0\le t\le 1} |f(t)|.
\end{equation*}

The discrete counterpart to $L^1$ is
the space of all summable sequences, $\li$, where
\begin{equation*}
	\|\vx\|_1 = \sum_{j=1}^\infty |x_j|
\end{equation*}
and, similarly to the space $C$, we denote by $c$ the space of all convergent sequences
endowed with the norm
\begin{equation*}
	\|\vx\|_\infty = \sup_{j\in\N} |x_j|.
\end{equation*}
Here and in the latter, we use the abbreviation $\vx =
\{x_j\}_{j=1}^\infty$ for the sequences and we write them in bold font.
Note that we also consider only real-valued sequence spaces.

All the above-mentioned spaces are complete, i.e., they form Banach spaces.

\subsection{$s$-numbers}

Let $X$ and $Y$ be Banach spaces.
To every operator $T \in B(X, Y)$, one can attach a sequence of non-negative
numbers $s_n(T)$ satisfying for every $n\in\N$ the following conditions
\begin{enumerate}[(S1)]
	\item $\| T \| = s_1(T) \geq s_2 (T) \geq \dots \geq 0$,
	\item $s_n(T + S) \leq s_n (T) + \|S\|$ for every $S \in B(X, Y)$,
	\item $s_n(B\circ T\circ A) \leq \| B \| s_n(T) \| A \|$
		for every $A \in B(X_1, X)$ and $B \in B(Y,Y_1)$,
	\item $s_n( \Id\colon \ell_n^2 \rightarrow \ell_n^2) = 1$,
	\item $s_n(T) = 0$ whenever $\rank T< n$.
\end{enumerate}
The number $s_n(T)$ is then called the $n$-th $s$-number of the operator $T$.
When (S4) is replaced by a stronger condition
\begin{enumerate}[(S1)] \setcounter{enumi}{5}
	\item $s_n( \Id\colon E\to E) = 1$ for every Banach space $E$, $\dim E = n$,
\end{enumerate}
we say that $s_n(T)$ is the $n$-th strict $s$-number of $T$. 

Note that the original definition of $s$-numbers, which was introduced by Pietsch
in \citep{PietschStudia}, uses the condition (S6)
which was later modified to accommodate wider class of $s$-numbers (like Weyl, Chang and
Hilbert numbers).
For a detailed account of $s$-numbers, one is referred for instance to
\cite{PietschBook}, \cite{CSBook} or \citep{ELBook}.

We shall briefly recall some particular strict $s$-numbers.
Let $T \in B(X,Y)$ and $n \in\N$. Then the $n$-th Approximation, Gelfand, Kolmogorov, Isomorphism, Mityagin and Bernstein numbers of $T$ are defined by
\begin{align*}
a_n(T) & = \inf_{\substack{ F \in B(X,Y) \\ \rank F < n}}
\| T-F \|,
	\\
c_n(T) & = \inf_{\substack{ M \subseteq X \\ \codim M < n}}
	\sup_{x\in B_M}  \| Tx  \|_Y,
	\\
d_n(T) & = \inf_{\substack{N \subseteq Y \\ \dim N < n}}
	\sup_{x\in B_X} \| Tx \|_{Y/N},
	\\
i_n(T) & = \sup  \| A \|^{-1} \| B \|^{-1},
\intertext{where the supremum is taken over all Banach spaces $E$ with $\dim E \geq n$ and
$A \in B(Y, E)$, $B \in B(E, X)$ such that $A\circ T\circ B$ is the identity
map on $E$,}
m_n(T) & =
	\sup_{\rho\ge 0}
	\sup_{\substack{ N \subseteq Y \\ \codim N \ge n}}
	Q_N T B_X \supseteq \rho B_{Y/N},
	\\
\intertext{and}
	b_n(T) & = \sup_{\substack{M \subseteq X \\ \dim M \ge n}}
		\inf_{x \in S_M} \| Tx \|_Y,
\end{align*}
respectively.

\section{Proofs} \label{sec:proofs}

\begin{lemma} \label{lemm:isomorphism}
Let $n\in\N$.
Then we have the following lower bounds of the Isomorphism numbers of $V$ and $\sigma$.
\begin{enumerate}[\rm (i)]
\item 
\begin{equation*}
	i_n(V) \geq \frac{1}{2n-1};
\end{equation*}
\item 
\begin{equation*}
	i_n(\sigma) \geq \frac{1}{2n-1}.
\end{equation*}
\end{enumerate}
\end{lemma}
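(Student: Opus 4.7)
The plan is to obtain both lower bounds by exhibiting, in each case, an explicit factorization of the identity on an $n$-dimensional Banach space $E$ through $\sigma$ (resp.\ $V$) with $\|A\|\,\|B\| \le 2n-1$. By the definition of $i_n$, this directly forces $i_n \ge 1/(2n-1)$. The natural choice is $E = \ell_\infty^n$, i.e., $\mathbb{R}^n$ equipped with the sup-norm. The construction exploits the telescoping identity $y_k = \sum_{j=1}^k (y_j - y_{j-1})$ (with the convention $y_0 := 0$), which is precisely the kind of relation that $\sigma$ and $V$ produce.

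For part (ii), I would define $B\colon E \to \ell^1$ by placing the forward differences at the first $n$ coordinates, $(B\vy)_j = y_j - y_{j-1}$ for $1 \le j \le n$ and zero otherwise, and let $A\colon c \to E$ be the restriction to the first $n$ entries. The telescope then gives $(\sigma B\vy)_k = y_k$ for $k \le n$, hence $A\sigma B = \Id_E$. For part (i), I would run the analogous continuous construction: partition $[0,1]$ into $n$ equal subintervals $I_k = [(k-1)/n,k/n]$, set $B\vy = n\sum_{k=1}^n (y_k - y_{k-1})\chi_{I_k}$, and take $Af = (f(k/n))_{k=1}^n$. Evaluating $V(B\vy)$ at the grid points $k/n$ reproduces $\vy$ coordinate-wise, so $AVB = \Id_E$.

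What remains is the norm bookkeeping, which is light. In both cases $\|A\| \le 1$ is immediate since $A$ only extracts coordinates (or point values) of the target. In both constructions one obtains $\|B\vy\|_1 = \sum_{k=1}^n |y_k - y_{k-1}|$, and a single application of the triangle inequality gives $\sum_{k=1}^n |y_k - y_{k-1}| \le (2n-1)\|\vy\|_\infty$; sharpness at the alternating vector $\vy = (1,-1,1,\dots,(-1)^{n+1})$ shows $\|B\| = 2n-1$. Combining, $i_n(V), i_n(\sigma) \ge \|A\|^{-1}\|B\|^{-1} = 1/(2n-1)$.

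I do not anticipate any significant obstacle. The one step that needs any thought is the discrete total-variation estimate above, but this is elementary. The conceptual kernel is simply the recognition that the value $1/(2n-1)$ comes from the alternating-sign configuration of the coordinates of $\vy$, which mirrors the extremal subspace already used for the Bernstein calculation in~\eqref{LefBer}; once $E = \ell_\infty^n$ and the $n$ sampling positions are chosen, the telescope closes the rest on its own.
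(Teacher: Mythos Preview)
Your proof is correct. Both you and the paper argue by exhibiting an explicit factorization $A\circ T\circ B=\Id_E$ on an $n$-dimensional space with $\|A\|\,\|B\|=2n-1$, but the concrete choices differ. For $\sigma$ the paper takes $B\vy=(y_1,-y_1,\dots,y_{n-1},-y_{n-1},y_n,0,\dots)$ and samples at the odd indices, whereas you use the forward differences $(y_1,y_2-y_1,\dots,y_n-y_{n-1},0,\dots)$ and sample at the first $n$ indices; both land at $\|B\|=2n-1$ via the same kind of count, the extremal $\vy$ being constant for the paper and alternating for you. For $V$ the divergence is larger: the paper works with a weighted $\ell^1_n$-space, a partition of $[0,1]$ into $2n-1$ equal pieces, and maps $A,B$ with $\|A\|=(2n-1)^2$, $\|B\|=1/(2n-1)$; your construction stays in $\ell^\infty_n$, uses only $n$ subintervals, and runs exactly parallel to your discrete argument. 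Your route is more uniform across (i) and (ii) and arguably cleaner; the paper's version mirrors more directly the alternating-bump subspace behind the Bernstein-number calculation~\eqref{LefBer}.
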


\begin{proof}
(i)
Let $n\in\N$ be fixed.
We shall construct a pair of maps $A$ and $B$ such that the chain
\def\lio{\ell^{1}_{w,n}}
\begin{equation*} 
	\lio
		\xrightarrow{B} \Li
		\xrightarrow{V} \Linf
		\xrightarrow{A} \lio
\end{equation*}
forms the identity on $\lio$. Here $\lio$ is the $n$-dimensional weighted space $\li$ with the norm given by
\begin{equation*}
	\|\vx\|_{\lio} = \sum_{k=1}^n w_k |x_k|.
\end{equation*}
For the purpose of this proof, we choose $w_k = 2$ for $1\le k \le n-1$ and $w_n=1$.

Now, define $A\colon\Linf\to \lio$ by
\begin{equation*}
	(Af)_k = (2n-1)\, f \biggl(\frac{2k-1}{2n-1}\biggr),
		\quad (1\le k\le n),
		\quad\text{for $f\in\Linf$}.
\end{equation*}
Obviously, $A$ is bounded with the operator norm
\begin{equation*}
	\|A\| = (2n-1)^2.
\end{equation*}

In order to construct the mapping $B$, consider the partition of the unit interval into subintervals $I_1$, $I_2$, \ldots, $I_{2n-1}$ of the same length, i.e.
\begin{equation*}
	I_k = \biggl[ \frac{k-1}{2n-1}, \frac{k}{2n-1} \biggr]
		\quad\text{ for $1\le k \le 2n-1$,}
\end{equation*}
and define
\begin{equation*}
	B(\vx) = \sum_{k=1}^{n-1} x_k \bigl( \chi_{I_{2k-1}} - \chi_{I_{2k}} \bigr)
		+ x_n \chi_{I_{2n-1}}.
\end{equation*}
Clearly $B(\vx)$ is integrable for every $\vx\in\lio$, $B$ is bounded and the operator norm
satisfies
\begin{equation*}
	\|B\| = \frac{1}{2n-1}.
\end{equation*}

One can observe that the composition $A\circ V\circ B$ is the identity mapping on $\lio$
and by the very definition of the $n$-th Isomorphism number we have
\begin{equation*}
	i_n(V) \ge \| A \|^{-1} \| B \|^{-1} = \frac{1}{2n-1}
\end{equation*}
which completes the proof.

As for the discrete case (ii), we consider the chain
\def\linfn{{\ell^\infty_n}}
\begin{equation*}
	\linfn
		\xrightarrow {B} \li
		\xrightarrow {\sigma} \linf
		\xrightarrow {A} \linfn
\end{equation*}
where $\linfn$ stands for the $n$-dimensional space $\ell^\infty$,
$A$ is given by
\begin{equation*}
	A(\vx)_k = x_{2k-1},
		\quad (1\le k\le n),
		\quad\text{for $\vx\in\linf$}
\end{equation*}
and $B$ satisfies
\begin{equation*}
	B(\vy) = (y_1, -y_1,\ldots, y_{n-1}, -y_{n-1}, y_n,0,0,\dots)
		\quad\text{for $\vy\in\linfn$.}
\end{equation*}
Both $A$ and $B$ are bounded, the composition $A\circ\sigma\circ B$ forms the identity on $\linfn$
and thus
\begin{equation*}
	i_n(\sigma) \ge \| A \|^{-1} \| B \|^{-1} = \frac{1}{2n-1},
\end{equation*}
since $\|A\| = 1$ and $\|B\| = 2n-1$.
\end{proof}

\begin{lemma} \label{lemm:Mityagin}
Let $n\in\N$ then we have the following estimates of the Mityagin numbers of $V$ and $\sigma$.
\begin{enumerate}[\rm (i)]
\item 
\begin{equation*}
	m_n(V) \le \frac{1}{2n-1};
\end{equation*}
\item 
\begin{equation*}
	m_n(\sigma) \le \frac{1}{2n-1}.
\end{equation*}
\end{enumerate}
\end{lemma}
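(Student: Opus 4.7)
The strategy I would adopt is duality. By the Hahn--Banach theorem, the condition $Q_N V B_{\LI} \supseteq \rho B_{\LInf/N}$ with $\codim N \ge n$ is equivalent to the adjoint operator $V^{*}\colon \LInf^{*} \to L^{\infty}(0,1)$, defined by $(V^{*}\mu)(s) = \mu([s,1])$ for $\mu \in \LInf^{*}$ viewed as a finite signed Borel measure on $[0,1]$, being bounded below by $\rho$ on the annihilator $N^{\perp} \subseteq \LInf^{*}$; that is, $\|V^{*}\mu\|_{\infty} \ge \rho \|\mu\|_{\LInf^{*}}$ for all $\mu \in N^{\perp}$. Since every finite-dimensional subspace of $\LInf^{*}$ is automatically $w^{*}$-closed and thus coincides with some $N^{\perp}$ for a finite-codimensional $N \subseteq \LInf$, one obtains the identification
\begin{equation*}
    m_n(V) \;=\; b_n(V^{*}).
\end{equation*}
It therefore suffices to show $b_n(V^{*}) \le 1/(2n-1)$.

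Fix an $n$-dimensional subspace $W \subseteq \LInf^{*}$. By compactness of the unit sphere $S_W$, the infimum $\inf_{\mu \in S_W} \|V^{*}\mu\|_{\infty}$ is attained at some $\mu_{\star} \in S_W$. I would analyze the extremal function $F := V^{*}\mu_{\star}$: it is of bounded variation on $[0,1]$ and, via the Jordan decomposition $\mu_{\star} = \mu_{\star}^{+} - \mu_{\star}^{-}$, decomposes as the difference of the non-increasing functions $s \mapsto \mu_{\star}^{\pm}([s,1])$. A Chebyshev-type equioscillation argument then forces $F$ to attain $\pm\|F\|_{\infty}$ alternately on $n$ pieces of a partition of $[0,1]$, with a boundary contribution at $s=1$; summing these oscillations together with the identity $\|\mu_{\star}\|_{\LInf^{*}} = \|\mu_{\star}^{+}\| + \|\mu_{\star}^{-}\|$ delivers $(2n-1)\|F\|_{\infty} \le \|\mu_{\star}\|_{\LInf^{*}}$, as needed.

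The discrete case $m_n(\sigma) \le 1/(2n-1)$ is entirely parallel. The adjoint $\sigma^{*}\colon \linf^{*} \to \ell^{\infty}$ acts on a functional $\phi \in \linf^{*}$ with $\li$-part $(\phi_k)$ and limit-functional component $\alpha$ by $(\sigma^{*}\phi)_j = \alpha + \sum_{k \ge j} \phi_k$. The analogous equioscillation argument on $n$-dimensional subspaces of $\linf^{*}$ yields $b_n(\sigma^{*}) \le 1/(2n-1)$, and hence the desired bound.

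The main obstacle is extracting the sharp constant $2n-1$. A naive topological approach---for instance applying Borsuk--Ulam to force $n-1$ interior zeros of $F$---yields only a factor $2$. The tight constant requires carefully balancing the interior oscillations of $F$ against its endpoint behavior at $s=1$, accounting for any atom of $\mu_{\star}$ at the right endpoint, so as to precisely match the extremal configuration of $n$ point masses that implicitly underlies the construction in Lemma~\ref{lemm:isomorphism}.
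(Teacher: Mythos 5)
Your duality reduction is sound as far as it goes: the covering condition $Q_N V B_{\LI}\supseteq\rho B_{\LInf/N}$ does imply $\|V^{*}\mu\|_{\infty}\ge\rho\|\mu\|$ for every $\mu$ in the subspace $N^{\perp}\subseteq \LInf^{*}$, whose dimension is at least $n$, so $m_n(V)\le b_n(V^{*})$, and your formula $(V^{*}\mu)(s)=\mu([s,1])$ is correct. The problem is everything after that. The inequality $b_n(V^{*})\le 1/(2n-1)$ is the entire analytic content of the lemma, and your argument for it consists of the assertion that the minimizer $\mu_{\star}$ of $\|V^{*}\mu\|_{\infty}$ over $S_W$ produces a function $F=V^{*}\mu_{\star}$ that equioscillates $n$ times. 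No such statement follows from the classical Chebyshev theory: equioscillation theorems characterize best approximations to a fixed function from an affine family satisfying a Haar condition, whereas here you minimize a sup-norm over the non-convex unit sphere of an \emph{arbitrary} $n$-dimensional subspace of measures, and nothing forces the minimizer to alternate $n$ times rather than fewer. (Your bookkeeping is right \emph{if} alternation at $n$ points is granted: $n-1$ interior jumps of size $2\|F\|_{\infty}$ plus the mass $\ge\|F\|_{\infty}$ to the right of the last point give $(2n-1)\|F\|_{\infty}\le\|\mu_{\star}\|$; but the alternation itself is exactly what must be proved.) You concede this yourself when you note that the naive topological argument loses the sharp constant and that the endpoint contribution ``requires careful balancing'' --- that balancing \emph{is} the proof, and it is not supplied. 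As written, the proposal reformulates the statement without proving it.

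The paper sidesteps this entirely by staying on the primal side: since $V$ is injective, the covering condition $Q_N V B_{\LI}\supseteq\varrho B_{\LInf/N}$ with $\codim N\ge n$ yields a subspace $E\subseteq\LI$ of dimension matching $\codim N$ on which $\|Vf\|_{\infty}\ge\varrho\|f\|_{1}$, and then \citep[Lemma~2.4]{Lef17} --- the sharp Bernstein-type estimate stating that any such $n$-dimensional subspace forces $\varrho\le 1/(2n-1)$ --- finishes the proof at once; the discrete case is handled by the discrete version of the same lemma. If you wish to keep the dual route you would have to reprove the analogue of Lef\`evre's lemma for $V^{*}$ from scratch; it is far more economical to extract a subspace of the domain from the covering condition and quote the estimate that is already available.
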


\begin{proof}
(i)
Fix some $n\in\N$ and any $0<\varrho<m_n(V)$. By the definition of the $n$-th Mityagin number,
there is a subspace of $\Linf$, say $N$, such that $\codim N \ge n$ and
\begin{equation} \label{eq:rhoball}
	Q_N V B_{\Li} \supseteq \varrho B_{\Linf/N}.
\end{equation}
Define
\begin{equation*}
	E = \bigl\{ f\in \Li;\, \spn\{Vf\}\cap N = \{0\} \bigr\}
\end{equation*}
and observe that, since $V$ is injective, $E$ is a subspace of dimension at most $n$
and satisfies
\begin{equation} \label{eq:rhoball2}
	Q_N V B_{\Li} = \bigl\{ [Vf];\, f\in B_E \bigr\}.
\end{equation}
Thus, by \eqref{eq:rhoball} and \eqref{eq:rhoball2}, we have
\begin{equation*}
	\|Vf\|_{\infty} \ge \|Vf\|_{\Linf/N} \ge \varrho
	\quad\text{for $f\in S_E$,}
\end{equation*}
and hence
\begin{equation*}
	\|Vf\|_{\infty} \ge \varrho \|f\|_1
	\quad\text{for $f\in E$.}
\end{equation*}
Therefore, thanks to \citep[Lemma~2.4]{Lef17},
\begin{equation} \label{eq:Lefineq}
	\varrho \le \frac{1}{2n-1}
\end{equation}
and the lemma follows by taking the limit $\varrho\to m_n(V)$.

(ii) The exact analogy holds in the discrete case, as
for every $0<\varrho<m_n(\sigma)$ one can find a $n$-th dimensional
subspace $E$ in $\ell^1$, such that
\begin{equation*}
	\|\sigma(\vx)\|_\infty \ge \varrho \|\vx\|_1
	\quad\text{for $\vx\in E$}.
\end{equation*}
The assertion \eqref{eq:Lefineq} then also holds by \citep[Lemma~2.4]{Lef17}
and its discrete modification in the proof of \citep[Theorem~3.2]{Lef17}.
\end{proof}

\begin{lemma} \label{lemm:Kolmogorov}
Let $n\ge 2$ then the following lower bounds of the Kolmogorov numbers of $V$ and $\sigma$ hold.
\begin{enumerate}[\rm (i)]
\item 
\begin{equation*}
	d_n(V) \ge \frac{1}{2};
\end{equation*}
\item 
\begin{equation*}
	d_n(\sigma) \ge \frac{1}{2}.
\end{equation*}
\end{enumerate}
\end{lemma}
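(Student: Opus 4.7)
The plan is, for each prescribed subspace $N \subseteq \Linf$ (respectively $N \subseteq \linf$) with $\dim N < n$, to exhibit a sequence of unit-norm test functions in $\Li$ (respectively $\li$) whose images under $V$ (respectively $\sigma$) are at distance at least $1/2$ from $N$, so that $\sup_f \|Vf\|_{\Linf/N} \ge 1/2$ and $\sup_\vx \|\sigma\vx\|_{\linf/N} \ge 1/2$. The heuristic is that $V$ maps a normalized $\Li$-bump at $s \in (0,1)$ to a continuous approximant of the Heaviside step $\chi_{[s,1]}$, whose unit jump at $s$ cannot be approximated in the sup-norm by any continuous function better than the bisecting value $1/2$; the discrete analogue replaces the bump by $\ve_k$, whose image $\sigma\ve_k=(0,\ldots,0,1,1,\ldots)$ jumps at coordinate $k$.

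For part (i), I would fix any $s \in (0,1)$ and set $f_\epsilon = \epsilon^{-1}\chi_{[s, s+\epsilon]}$, so $\|f_\epsilon\|_{\Li}=1$. Since $N$ is finite-dimensional, a best approximation $g_\epsilon \in N$ of $Vf_\epsilon$ exists, and taking $0 \in N$ as a competitor immediately gives $\|g_\epsilon\|_\infty \le 2$. Bolzano--Weierstrass in the finite-dimensional space $N$ then produces a subsequence with $g_\epsilon \to g^* \in N$ uniformly. For fixed $t > s$ and $\epsilon < t-s$ one has $Vf_\epsilon(t) = 1$, hence $\|Vf_\epsilon - g_\epsilon\|_\infty \ge |1 - g_\epsilon(t)|$; passing first to the limit in $\epsilon$, then $t \to s^+$, and using continuity of $g^*$ yields $\liminf_\epsilon \|Vf_\epsilon\|_{\Linf/N} \ge |1 - g^*(s)|$. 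A symmetric argument for $t < s$ gives $\liminf \ge |g^*(s)|$, so that $\max(|g^*(s)|, |1-g^*(s)|) \ge 1/2$ delivers $d_n(V) \ge 1/2$.

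For part (ii), I would take $\vx = \ve_k \in B_{\li}$ and let $k \to \infty$. Writing $H_k = \sigma\ve_k$ (with unit jump at coordinate $k$) and letting $\vy^{(k)} \in N$ be the corresponding best approximation, compactness in the finite-dimensional $N$ again extracts a uniformly convergent subsequence $\vy^{(k)} \to \vy^* \in N$ with limit $L^* = \lim_j \vy^*(j)$. Evaluating at any fixed coordinate $i$ (where $H_k(i)=0$ eventually) gives $\liminf_k \|H_k - \vy^{(k)}\|_\infty \ge \|\vy^*\|_\infty \ge |L^*|$, while evaluating at the jumping coordinate $k$ itself uses the triangle bound $|\vy^{(k)}(k) - L^*| \le \|\vy^{(k)} - \vy^*\|_\infty + |\vy^*(k) - L^*| \to 0$ (combining uniform convergence with $\vy^* \in \linf$) to obtain $\liminf \ge |1 - L^*|$. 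Since $\max(|L^*|, |1-L^*|) \ge 1/2$, we conclude $d_n(\sigma) \ge 1/2$.

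The main delicate point in both cases is the diagonal evaluation of the approximants $g_\epsilon$ (respectively $\vy^{(k)}$) at the moving location $s$ (respectively at the escaping index $k$): the best approximation is only indexed by the test parameter, and one must first upgrade it to a genuine limit before pinning it against the jump. Finite-dimensionality of $N$ is precisely what makes this upgrade possible, after which continuity of $g^* \in \Linf$ (respectively tail-convergence of $\vy^* \in \linf$) transfers the pointwise estimate to the critical point. The resulting lower bound $1/2$ reflects the minimal error incurred when approximating a unit jump by a function that is either continuous or has a well-defined tail at infinity.
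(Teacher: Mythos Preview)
Your argument is correct and follows essentially the same route as the paper: pick unit-norm bumps whose images under $V$ (respectively $\sigma$) approximate a unit step, use finite-dimensionality of $N$ to extract a uniformly convergent subsequence of (near-)best approximants, then evaluate on both sides of the jump and apply $\max(|a|,|1-a|)\ge\tfrac12$. The only cosmetic differences are that the paper parametrises the bumps discretely as $f_k=2^{k+1}\chi_{(2^{-k-1},2^{-k})}$ shrinking to the origin (and evaluates at $0$ and $2^{-k}$), whereas you keep a fixed base point $s$ and shrink the width $\epsilon$; and the paper first fixes a near-optimal $N$ with an extra $\varepsilon$, while you argue uniformly over all admissible $N$, which is slightly cleaner.
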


\begin{proof}
(i) Fix arbitrary $n\ge 2$ and $\varepsilon >0$. By the very definition of
the $n$-th Kolmogorov number, there exists a subspace of $\Linf$, say $N$,
such that $\dim N < n$ and
\begin{equation} \label{eq:koldef}
	d_n(V)+\varepsilon
		\ge \sup_{f\in B_{\Li}} \|Vf\|_{\Linf/N}.
\end{equation}
Let us define the trial functions
\begin{equation} \label{eq:stairs}
	f_k = 2^{k+1} \chi_{(2^{-k-1},2^{-k})},
		\quad (k\in\N).
\end{equation}
There is $\|f_k\|_1 = 1$ for every $k\in\N$.
Now, by the definition of the quotient norm, to every $Vf_k$ one can attach a
function $g_k \in N$
in a way that
\begin{equation} \label{eq:soclose}
	\|Vf_k - g_k\|_\infty \le \|Vf_k\|_{\Linf/N} + \varepsilon.
\end{equation}
Observe that the set of all the functions $g_k$ is bounded in $N$. Indeed, by \eqref{eq:koldef} and \eqref{eq:soclose},
\begin{align*}
	\|g_k\|_\infty
		\le \|Vf_k - g_k\|_\infty + \|Vf_k\|_\infty
		\le \|Vf_k\|_{\Linf/N} + \varepsilon + \|V\|\|f_k\|_1
		\le d_n(V) + 2\varepsilon + 1
\end{align*}
for every $k\in\N$. Thus, since $N$ is finite-dimensional, there is a
convergent subsequence of $\{g_k\}$ which we denote $\{g_k\}$ again. Hence
$g_k$ converges to, say, $g\in N$, i.e.~there is an index $k_0$ such
that $\|g_k - g\|_\infty < \varepsilon$ for every $k\ge k_0$.
The limiting function $g$ then satisfies
\begin{equation*}
	\|Vf_k - g\|_\infty \le \|Vf_k - g_k\|_\infty + \|g_k - g\|_\infty
		\le \|Vf_k\|_{\Linf/N} + 2\varepsilon
\end{equation*}
for $k\ge k_0$ and thus
\begin{equation} \label{eq:dnsup}
	\sup_{k\ge k_0} \|Vf_k - g \|_\infty
		\le d_n(V) + 3\varepsilon.
\end{equation}
Next, we estimate the left hand side of \eqref{eq:dnsup} by taking the value attained at zero,
i.e.
\begin{equation} \label{eq:eval1}
	\sup_{k\ge k_0} \|Vf_k - g \|_\infty
	\ge \sup_{k\ge k_0} |Vf_k(0) - g(0) |
	= |g(0)|,
\end{equation}
or at the points $2^{-k}$, i.e.
\begin{equation} \label{eq:eval2}
	\sup_{k\ge k_0} \|Vf_k - g \|_\infty
	\ge \sup_{k\ge k_0} |Vf_k(2^{-k}) - g(2^{-k}) |
	\ge |1-g(0)|,
\end{equation}
where we used that $g$ is continuous.
Combining \eqref{eq:dnsup}, \eqref{eq:eval1} and \eqref{eq:eval2}, we get
\begin{equation*}
	d_n(V) + 3\varepsilon
	\ge \max \bigl\{ |g(0)|, 1-|g(0)| \bigr\}
	\ge \frac{1}{2}.
\end{equation*}
Therefore $d_n(V) \ge 1/2$ by the arbitrariness of $\varepsilon$.

(ii) The proof needs just slight modifications here. Instead of functions $f_k$,
one can use the canonical vectors $\ve^k=(0,\ldots,0,1,0,\ldots)$, where
the element $1$ is placed at the $k$-th coordinate. If we follow the previous lines,
we find close points $\vy^k\in N$, their limit~$\vy$ in~$\linf$ and we end up with
\begin{equation} \label{eq:dnsups}
	\sup_{k\ge k_0} \| \sigma(\ve^k) - \vy \|_\infty
		\le d_n(\sigma) + 3\varepsilon.
\end{equation}
Now, we estimate the supremum in \eqref{eq:dnsups} by
taking the $k$-th coordinate, i.e.
\begin{equation*} 
	\sup_{k\ge k_0} \| \sigma(\ve^k) - \vy \|_\infty
	\ge \sup_{k\ge k_0} |\sigma(\ve^k)_k - y_k |
	\ge |1-\lim_{k\to\infty} y_k|,
\end{equation*}
or by taking the coordinate $k-1$, i.e.
\begin{equation*} 
	\sup_{k\ge k_0} \| \sigma(\ve^k) - \vy \|_\infty
	\ge \sup_{k\ge k_0} |\sigma(\ve^k)_{k-1} - y_{k-1} |
	\ge |\lim_{k\to\infty} y_k|,
\end{equation*}
and the conclusion follows.
\end{proof}

\begin{lemma} \label{lemm:Gelfand}
Let $n\ge 2$ then the estimates of Gelfand numbers of $V$ and $\sigma$ read as
\begin{enumerate}[\rm (i)]
\item 
\begin{equation*}
	c_n(V) \ge \frac{1}{2};
\end{equation*}
\item 
\begin{equation*}
	c_n(\sigma) \ge \frac{1}{2}.
\end{equation*}
\end{enumerate}
\end{lemma}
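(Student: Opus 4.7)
My plan is to adapt the Kolmogorov argument of Lemma~\ref{lemm:Kolmogorov} to the Gelfand setting by working on the domain side of~$V$. Fix an arbitrary subspace $M\subseteq\Li$ with $\codim M<n$, and write $M=\ker\Phi$ where $\Phi\colon\Li\to\mathbb{R}^m$ (with $m<n$) is a bounded linear surjection collecting the $m$ defining functionals (pass to a linearly independent subset if necessary). It then suffices to show $\sup_{f\in B_M}\|Vf\|_\infty\ge\tfrac12$, since $M$ is arbitrary.

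I would reuse the staircase test family $f_k=2^{k+1}\chi_{(2^{-k-1},2^{-k})}$ from~\eqref{eq:stairs}. For $k\neq k'$ the disjoint supports give $\|f_k-f_{k'}\|_1=2$, while the ramp-plateau shape of $Vf_k$ yields $\|V(f_k-f_{k'})\|_\infty=1$, so the ``free'' ratio is exactly $\tfrac12$. The obstacle is that $f_k-f_{k'}$ need not lie in~$M$. To push it in, observe that $\{\Phi(f_k)\}_{k\in\N}$ is bounded in $\mathbb{R}^m$ and hence contains a Cauchy subsequence; so for every $\varepsilon>0$ one can choose indices $k<k'$ in this subsequence with $\|\Phi(f_k-f_{k'})\|<\varepsilon$.

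The open mapping theorem applied to the surjection $\Phi$ then supplies a constant $C=C(M)>0$ and an element $g\in\Li$ solving $\Phi(g)=\Phi(f_k-f_{k'})$ with $\|g\|_1\le C\varepsilon$. Set $h:=f_k-f_{k'}-g$; then $h\in M$, $\|h\|_1\le 2+C\varepsilon$, and $\|Vh\|_\infty\ge \|V(f_k-f_{k'})\|_\infty-\|V\|\,\|g\|_1\ge 1-C\varepsilon$. Normalising gives $\sup_{f\in B_M}\|Vf\|_\infty \ge (1-C\varepsilon)/(2+C\varepsilon)$, and letting $\varepsilon\to 0^+$ yields the claimed bound~$\tfrac12$.

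For part~(ii) the same scheme runs verbatim with the canonical unit vectors $\ve^k$ in place of~$f_k$: one checks $\|\sigma(\ve^k-\ve^{k'})\|_\infty=1$ and $\|\ve^k-\ve^{k'}\|_1=2$ for $k\neq k'$, and the open mapping perturbation carries over in~$\li$. The main subtlety, shared by both parts, is that the sequence $\{f_k\}$ (resp.\ $\{\ve^k\}$) does not converge in norm, so naive differencing does not land in~$M$; what rescues the argument is compactness of the finite-dimensional image of $\Phi$, which together with the open mapping theorem provides a small perturbation into~$M$ that essentially preserves both $\|\cdot\|_1$ and $\|V\cdot\|_\infty$.
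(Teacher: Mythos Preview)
Your proposal is correct and follows essentially the same route as the paper: the same test family~$f_k$, compactness in the finite-dimensional quotient to find two indices whose difference is nearly in~$M$, then a small perturbation into~$M$ and normalisation. The only cosmetic difference is that the paper works directly with the quotient norm on $\Li/M$ (so the approximant $g\in M$ comes straight from the definition of $\|f_k-f_l\|_{\Li/M}$, with no open-mapping constant~$C$ needed), whereas you factor through a surjection~$\Phi$ and invoke the open mapping theorem; the underlying idea is identical.
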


\begin{proof}
(i)
Let $n\ge 2$ and $\varepsilon>0$ be fixed. By the definition of the $n$-th
Gelfand number, we can find a subspace $M$ in $\Li$ having $\codim M < n$ and satisfying
\begin{equation} \label{eq:Gelfdef}
	c_n(V) + \varepsilon
		\ge \sup_{f\in B_{M}} \|Vf\|_\infty.
\end{equation}
The proof will be finished once we show that the supremum in \eqref{eq:Gelfdef} is
at least one half.
We make use the step functions $f_k$, ($k\in\N$), defined by \eqref{eq:stairs} in the proof
of Lemma~\ref{lemm:Kolmogorov}. Recall that $\|f_k\|_1=1$, $\|f_k-f_l\|_1 = 2$
and $\|Vf_k - Vf_l\|_{\infty} = 1$ for $k\ne l$.
Note that the quotient space $\Li/M$ is of finite dimension thus,
the projected sequence $\{[f_k]\}$ is bounded and hence there is a Cauchy
subsequence, which we denote $\{[f_k]\}$ again.
Now, let $\eta>0$ be fixed. We have
\begin{equation} \label{eq:fifj}
	\|f_k - f_l\|_{\Li/M} < \eta
\end{equation}
for $k$ and $l$ sufficiently large. Let us denote $f = \frac{1}{2} (f_k - f_l)$ for these $k$ and $l$. Thanks to \eqref{eq:fifj} and the definition of quotient norm, one can find
a function $g\in M$ such that
\begin{equation*}
	\|f -g\|_1 \le \eta.
\end{equation*}
On setting
\begin{equation*}
	h = \frac{g}{1+\eta},
\end{equation*}
we have
\begin{equation*}
	\|h\|_1 \le \frac{1}{1+\eta}
		\bigl( \|f\|_1 + \|f-g\|_1 \bigr)
		\le 1,
\end{equation*}
whence $h\in B_M$.
Next
\begin{align*}
	\|Vh\|_\infty
		& \ge \frac{1}{1+\eta}
			\bigl( \|Vf\|_\infty - \|V(f-g)\|_\infty \bigr)
			\\
		& \ge \frac{1}{1+\eta}
			\biggl( \frac{1}{2} - \|V\| \|f-g\|_1 \biggr)
			\\
		& \ge \frac{1}{1+\eta}
			\biggl( \frac{1}{2} - \eta \biggr),
\end{align*}
thus
\begin{equation*}
	\sup_{f\in B_{M}} \|Vf\|_\infty
		\ge \frac{1}{1+\eta}
			\biggl( \frac{1}{2} - \eta \biggr)
\end{equation*}
and the lemma follows, since $\eta>0$ was arbitrarily chosen.

The proof of the discrete counterpart (ii) is completely analogous, once we
consider the canonical vectors $\ve^k$ instead of $f_k$, hence we omit it.
\end{proof}

\begin{lemma} \label{lemm:approx}
Let $n\ge 2$.
Then we have the following upper bounds of the Approximation numbers of $V$ and $\sigma$.
\begin{enumerate}[\rm (i)]
\item 
\begin{equation*}
	a_n(V) \le \frac{1}{2};
\end{equation*}
\item 
\begin{equation*}
	a_n(\sigma) \le \frac{1}{2}.
\end{equation*}
\end{enumerate}
\end{lemma}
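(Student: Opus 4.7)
The key observation is that $a_n$ is non-increasing in $n$, so for $n\ge 2$ it suffices to produce, for each of $V$ and $\sigma$, a \emph{rank-one} operator $F$ with $\|V-F\|\le \tfrac12$ (respectively $\|\sigma-F\|\le\tfrac12$). That will bound $a_2$ by $1/2$, and hence $a_n$ for all $n\ge 2$.

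For part (i), the plan is to take
\begin{equation*}
        Ff(t) = \frac12\int_0^1 f(s)\,\d s,
        \quad (0\le t\le 1),
\end{equation*}
a constant-valued (hence continuous) function for every $f\in\Li$. This $F$ is clearly bounded from $\Li$ to $\Linf$ and has rank one. To estimate $\|V-F\|$, I would split $\int_0^1$ as $\int_0^t+\int_t^1$ and write
\begin{equation*}
        Vf(t) - Ff(t)
        = \frac12\int_0^t f(s)\,\d s \;-\; \frac12\int_t^1 f(s)\,\d s,
\end{equation*}
so that the triangle inequality gives $|Vf(t)-Ff(t)|\le \tfrac12\|f\|_1$ uniformly in $t$. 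This yields $\|V-F\|\le\tfrac12$, and the conclusion follows.

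For the discrete analogue (ii), the same idea works verbatim. Define the rank-one operator $F\colon\li\to\linf$ by
\begin{equation*}
        F(\vx)_k = \frac12\sum_{j=1}^\infty x_j,
        \quad (k\in\N),
\end{equation*}
which produces a constant (hence convergent) sequence. Splitting $\sum_{j=1}^\infty x_j = \sum_{j=1}^k x_j + \sum_{j=k+1}^\infty x_j$ gives
\begin{equation*}
        \sigma(\vx)_k - F(\vx)_k
        = \frac12\sum_{j=1}^k x_j \;-\; \frac12\sum_{j=k+1}^\infty x_j,
\end{equation*}
whence $|\sigma(\vx)_k - F(\vx)_k|\le \tfrac12\|\vx\|_1$ for every $k$, so $\|\sigma-F\|\le\tfrac12$.

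There is no serious obstacle here; the only thing to verify is that the constructed approximants are well-defined and bounded into the correct target spaces (which is routine: the constant function lies in $\Linf$, and the constant sequence lies in $\linf$). The non-trivial content of the theorem will come from matching these upper bounds with the already-established lower bounds for $c_n$ (which dominates $a_n$ via the standard inequality $a_n\ge c_n$) shown in Lemma~\ref{lemm:Gelfand}.
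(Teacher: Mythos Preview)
Your proof is correct and essentially identical to the paper's own argument: the same rank-one approximants $Ff(t)=\tfrac12\int_0^1 f$ and $F(\vx)_k=\tfrac12\sum_{j\ge1}x_j$ are used, with the same splitting $\int_0^1=\int_0^t+\int_t^1$ (resp.\ $\sum_{j\ge1}=\sum_{j\le k}+\sum_{j>k}$) and the triangle inequality to obtain $\|V-F\|\le\tfrac12$ and $\|\sigma-F\|\le\tfrac12$. One small wording slip in your closing remark: you write that $c_n$ ``dominates $a_n$'' while also (correctly) stating $a_n\ge c_n$; the logic you intend---that a lower bound on $c_n$ transfers to $a_n$---is right, but the word ``dominates'' is backwards.
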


\begin{proof}
(i) Consider the one-dimensional operator $F\colon\Li\to\Linf$ given by
\begin{equation*}
	Ff(t) = \frac{1}{2}\int_0^1 f(s)\,\d s,
	\quad (0\le t \le1),
	\quad\text{for $f\in\Li$.}
\end{equation*}
Then $F$ is a sufficient approximation of $V$. Indeed,
\begin{align*}
	\|Vf - Ff\|_{\infty}
		& = \sup_{0\le t\le 1} \biggl| \int_0^t f(s)\,\d s - \frac{1}{2} \int_{0}^{1} f(s)\,\d s \biggr|
			\\
		& = \sup_{0\le t\le 1} \biggl| \frac{1}{2} \int_0^t f(s)\,\d s - \frac{1}{2} \int_{t}^{1} f(s)\,\d s \biggr|
			\\
		& \le \sup_{0\le t\le 1} \frac{1}{2} \int_{0}^{t} |f(s)|\,\d s
			+ \frac{1}{2} \int_{t}^{1} |f(s)|\,\d s
			\\
		& = \frac{1}{2} \|f\|_{1}
\end{align*}
and therefore
\begin{equation*}
	a_n(V)\le \|V-F\| \le \frac{1}{2}.
\end{equation*}

In order to show (ii), choose the operator
\begin{equation*}
	\varrho(\vx)_k = \frac{1}{2}\sum_{j=1}^\infty x_j,
		\quad (k\in\N),
		\quad \text{for $\vx\in\li$.}
\end{equation*}
This is a well-defined one-dimensional operator and, by the calculations similar to above,
$\|\sigma-\varrho\|\le 1/2$. The proof is complete.
\end{proof}

Now, we are at the position to prove the main results.

\begin{proof}[Proof of Theorem~\ref{thm:main}]
Let $n\ge 2$ be fixed. Since $a_n(V)$ is the largest between all $s$-numbers,
we immediately obtain the inequalities $a_n(V)\ge c_n(V)$ and $a_n(V)\ge d_n(V)$.
Using Lemma~\ref{lemm:approx} with Lemma~\ref{lemm:Gelfand} and Lemma~\ref{lemm:Kolmogorov}
we obtain
\begin{equation*}
	\textstyle
	\frac{1}{2}\ge a_n(V) \ge c_n(V) \ge \frac{1}{2}
\qquad\text{and}\qquad
	\frac{1}{2}\ge a_n(V) \ge d_n(V) \ge \frac{1}{2}
\end{equation*}
respectively. This gives~\eqref{acd}.

Next, let $n$ be arbitrary. Due to $i_n(V)$ being the smallest strict $s$-number,
we have that $i_n(V)\le b_n(V)$ and also $i_n(V)\le m_n(V)$. For the lower bound,
we use Lemma~\ref{lemm:isomorphism}, while for the upper, we make use of Lemma~\ref{lemm:Mityagin} and the result of Lef\`evre, \citep{Lef17}.
This gives \eqref{bmi}.
\end{proof}

\begin{proof}[Proof of Theorem~\ref{thm:discr}]
The proof follows along exactly the same lines as that of Theorem~\ref{thm:main}
and hence omitted.
\end{proof}

\end{document}